\newtheorem{sen}{Statement}
\newtheorem{lemma}{Lemma}
\newtheorem* {Stheorem}{Svenonius theorem}
\newtheorem* {seq}{Corollary}
\begin{document}
\title{The Lattice of Definability for Integers with Successor}
\author[A.L. Semenov, S.F. Soprunov]{A.L. Semenov, S.F. Soprunov}
\begin{abstract} 
In the present paper we describe the lattice of definability for integers with successor (relation $y = x +1$). The lattice elements (reducts) constitute three (naturally described) infinite series. Our proof uses Svenonius theorem.
\end{abstract}

\maketitle

Let  $S$ be any set of relations on a universe $A$ and $R$ is a name of a relation on $A$. To define a relation $R$ through $S$ in a logical language $L$ means:

(1) to give names to some relations from $S$ and 

(2) to write a formula in the language $L$ using the given names as extra-logical symbols that is equivalent to $R$ (on $A$).

In this paper $L$ will be the first-order logic with equality, we consider countable universe, and countable or finite sets of relations.

The \emph{(definability) closure} is the operation of extension of a set of relations $S$ with all relations definable through $S$. This operation is a closure operation in the usual topological and algebraic sense. Closed sets of relations we call \emph{definability spaces}, the set $S$ is a \emph{base} of the definability closure of $S$.

A permutation $\varphi$ of $A$ \emph{preserves} a relation $R$ iff $R(\overline a) \equiv R(\varphi (\overline a))$ where $\overline a$ is a tuple of elements from $A$ and $\varphi (\overline a)$ is the tuple of their images under permutation $\varphi$. A permutation preserves a set of relations $S$ if it preserves all relations from $S$, a collection of permutations $F$ preserves $S$ if any permutation from $F$ preserves $S$.

With any set of relations $S$ we can associate the group $G_S \subseteq Sym(A)$ of permutations of the set $A$ preserving $S$. It's obvious, that $S_1 \subseteq S_2 \Rightarrow G_{S_1} \supseteq G_{S_2}$ but usually we can't recover a definability space from the corresponding subgroup of $Sym(A)$.

Let $S_1$ be a definability space with the universe $A$, $B \subseteq A$ and let $S_2$ be the set of restrictions on $B$ of all relations from $S_1$. Let us give names to relations from a finite subset $F$ of $S_1$ and use the same names for the restrictions of relations from $F$ on $B$. Let us take any formula (using the chosen names). It defines a relation on $A$, and on $B$ as well. The second relation can be not the restriction of the first on B. But if it is so for any formula (and any names), then $S_2$ is called \emph{elementary restriction} of $S_1$ (and $S_1$ is \emph{elementary extension} of $S_2$).
It's obvious that an elementary restriction of a definability space is a definability space.

%%We use following notations: $Dom(f), Im(f)$ are the domain and image of a mapping $f$. Let $M=<\!\!A, \Sigma\!\!>$ be a structure which signature $\Sigma$ by $tp_\Sigma(\overline b/X)$ we denote the complete type in the signature $\Sigma$ of a tuple $\overline b \in M$ over a set $X$ i.e. the set of all formulas $\psi(\overline x, \overline a)$ such that  $M \vDash \psi(\overline b, \overline a), \overline a \in X$.

%We present here its proof to make clear background ideas we use in our theorem also.
%%ADDED
%ADDED
If $S_1$ and $S_2$ are sets of relations on the same universe, then $S_1 \succcurlyeq S_2$ denotes that the definability space, generated by $S_2$ is the subset of the definability space, generated by $S_1$. If $S_1 \succcurlyeq S_2$ and $S_2 \succcurlyeq S_1$ then we write $S_1 \approx S_2$.

We call a definability space \emph{countable} if it is countable or finite and its universe is countable. 

Our main tool will be the Svenonius theorem \cite{sv}, which in our case may be formulated as following:

\begin{Stheorem}\label{Sven}
Let $S, S^+$ are countable definability spaces on a universe $A$ , and $S \subset S^+$. Then the following are equivalent for any relation $R \in S^+$. 

(a) $R \in S$.
 
(b) For any ${S^+}'$ -- countable elementary extension of $S^+$ the following holds. Assume that $S' \subset {S^+}', R' \in {S^+}'$ and the restriction of $S'$ on $A$ is $S$, the restriction $R'$ on $A$ is $R$. Then the group of permutations on the universe of ${S^+}'$, preserving all relations from $S'$, preserves $R'$. 
\end{Stheorem}

Hereby the Svenonius theorem states that if we consider not only subgroups of permutations of an original space but also permutations of its elementary extensions then the definability space can be recovered.

Permutation groups were used for description of definability lattices in many cases (see e.g. \cite{MAC} ). The first remarkable result – the description of definability lattice for rationals with order was obtained in \cite{FRA}. It was rediscovered many times in (see \cite{cam}, \cite{sem_much}).

The main goal is to describe the definability lattice for integers with successor.

These examples were discussed in \cite{csr}.

 \section{Integer numbers $Z$ with the successor relation $'$}

In the present section we consider the structure $<\!\!Z, \{'\}\!\!>$ -- integer numbers with the successor relation. We will show that the lattice of the definability space of $<\!\!Z, \{'\}\!\!>$ is rather simple. For any natural number $n$ we denote the relations $|x_1-x_2|=n$ by $A_{0,n}$, the relation $x_1-x_2=x_3-x_4=n \lor x_1-x_2=x_3-x_4=-n$ by $A_{1,n}$, and the relation $x_1-x_2=n$ by $A_{2,n}$. We set $A_{i,j} \land A_{k,l} = A_{m,n}$ where $m=max\{k,m\}, n=$ greatest common divisor of $j,l$; and $A_{i,j} \lor A_{k,l} = A_{m,n}$ where $m=min\{k,m\}, n=$ least common multiple of $j,l$. We will demonstrate that if relation $R$ is definable in $<\!\!Z, \{'\}\!\!>$  and is not identically true (false) then $R \approx A_{i,n}$ for some natural $n$ and $i \leqslant 2$. 

Unlike the previous section there exist different countable extensions of the original structure, but there is the universal countable elementary extension of $<\!\!Z, \{'\}\!\!>$ -- the structure $M_Z$ defined as follows: the domain of $M_Z$ is $Z \times S$ where $S$ is some countable set and the relation $'$ is specified as $(x,y)=(x_1, y_1)' \iff x=x_1+1, y=y_1$.

It's easy to see that any countable elementary extension of $M_Z$ is isomorphic to $M_Z$ so, according to Svenonius theorem we can limit our consideration to permutations on $M_Z$. For any $a \in M_Z$ we denote by $a^1 (a^2)$ the first (second) component of $a$. Two members $a,b \in M_Z$ are called to be in the same  \emph{galaxy}, if $a^2=b^2$. For any $a \in M_Z, z \in Z$ by $a \pm z$ we denote the item $(a^1 \pm z, a^2)$. We will also need the ordered set $Z_\infty = Z \cup \{\infty\}$, the order on $Z$ is natural and $z<\infty$ for any $z \in Z$. We define the function of absolute value ($||$) on $Z_\infty$: it is natural on $Z$ and $|\infty|=\infty$. The subtraction function  ($-$) maps $M_Z \times M_Z$ on $Z_\infty$ as follows: $a-b=a^1-b^1$ if $a^2=b^2$ and equal  to $\infty$ if $a^2 \ne b^2$. The expression $a>b$ for $a,b \in M_Z$ is simply an abbreviation for $\infty >a-b>0$. If $m$ is natural number then we call two vectors $\overline a, \overline b \in M_Z$ of the same length \emph{$m$-indistinguishable}, if $|a_i-a_j|<m$ or $|b_i-b_j|<m$ implies $a_i - a_j=b_i-b_j$ for any $i,j$. 

In the present section a \emph{permutation} is a permutation on the domain of $M_Z$. A permutation $f$ is called  \emph{shift} if $f(a) - f(b) = a-b$  for any $a,b \in M_Z$. It is clear that the set of all permutations preserving $'$ is the set of shifts. By $\Gamma$ we denote the group of all shifts.

%Next two lemmas are similar to lemmas \ref{lem10}  and \ref{lem20}.
\begin{lemma} \label{lem50}
Suppose that $a_0, \dots, a_{n-1}, b_0, \dots, b_{n-1} \in M_Z$ are such, that for all $i,j$ holds $a_i-a_j=b_i-b_j$. Then the partial mapping $f(a_i)=b_i$, for all $i$ can be extended to a shift
\end {lemma}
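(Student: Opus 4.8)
The plan is to first describe exactly what shifts look like, then read off from the hypothesis the data needed to build one. (Here I read the conclusion as the assertion that the partial map $f(a_i)=b_i$ extends to a shift.) By definition a permutation $f$ is a shift precisely when $f(a)-f(b)=a-b$ for all $a,b$. Since $a-b\in Z$ exactly when $a$ and $b$ lie in the same galaxy and $a-b=\infty$ otherwise, this condition forces $f$ to map each galaxy bijectively onto a galaxy and, within a galaxy, to act by a fixed integer translation. Concretely, a shift is nothing but a permutation $\pi$ of the index set $S$ together with a family of integers $(c_s)_{s\in S}$, acting as $f(x,s)=(x+c_s,\pi(s))$; conversely every such pair is a shift, since distinct galaxies go to distinct galaxies and the translation within a galaxy cancels. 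So to extend $f(a_i)=b_i$ it suffices to produce a suitable $\pi$ and suitable constants $c_s$.

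Next I would unpack the hypothesis $a_i-a_j=b_i-b_j$, which carries two pieces of information. In the case where the common value is $\infty$, it shows that $a_i,a_j$ lie in the same galaxy if and only if $b_i,b_j$ do; hence the partition of $\{1,\dots,n\}$ into galaxy-blocks is the same whether read off from the $a_i$ or from the $b_i$. In the case where the value is an integer, it shows that for indices $i,j$ in one block we have $b_i^1-b_j^1=a_i^1-a_j^1$, so the relative positions inside each galaxy agree. Let $s_1,\dots,s_k$ be the distinct galaxies occupied by the $a_i$ and $t_1,\dots,t_k$ those occupied by the $b_i$; the galaxy-matching just observed makes $s_l\mapsto t_l$ a well-defined bijection between these two finite subsets of $S$.

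I would then assemble the shift. Extend the finite bijection $s_l\mapsto t_l$ to a permutation $\pi$ of all of $S$; this is possible because $S$ is countable, so $S\setminus\{s_1,\dots,s_k\}$ and $S\setminus\{t_1,\dots,t_k\}$ have the same cardinality and may be matched arbitrarily. For each block set $c_{s_l}=b_i^1-a_i^1$ for any index $i$ with $a_i$ in galaxy $s_l$; the within-galaxy agreement from the previous paragraph is exactly the statement that this value does not depend on the choice of $i$, so $c_{s_l}$ is well defined. Put $c_s=0$ for the remaining $s$. The resulting $f(x,s)=(x+c_s,\pi(s))$ is a shift by the first paragraph, and by construction $f(a_i)=(a_i^1+c_{s_l},\pi(s_l))=(b_i^1,t_l)=b_i$ for each $i$.

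The only real content lies in correctly exploiting the two roles of the subtraction function: its value $\infty$ must be used to match up galaxies, and its finite values to match up positions within a galaxy. Once the hypothesis is decoded this way the construction is forced, and the sole nontrivial point — that the constant $c_{s_l}$ attached to a block is independent of the representative index — is precisely the finite-value part of the hypothesis; the extension of $\pi$ to all of $S$ is routine given the countability of $S$. This parallels Lemma \ref{lem10}, with the order type of a finite tuple replaced by its matrix of differences in $Z_\infty$.
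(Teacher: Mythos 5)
Your proof is correct, and it is exactly the routine construction the paper has in mind when it dismisses Lemma \ref{lem50} as trivial without proof: characterize shifts as $(x,s)\mapsto(x+c_s,\pi(s))$, match galaxy blocks via the $\infty$-values of the differences, match within-galaxy offsets via the finite values, and extend the finite galaxy bijection arbitrarily. You also correctly repaired the statement's typo by reading the conclusion as extending $f(a_i)=b_i$ rather than $f(a_i)=f(b_i)$.
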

\begin{lemma}\label{lem60}
For any formula $R$ in the signature $\{'\}$ there exists such a natural number $w$  that for any two $w$-indistinguishable tuples $\overline a, \overline b \in M_Z$ holds $R(\overline a)=R(\overline b)$.
\end{lemma}
Lemma \ref{lem50}  is very simple, to proof it you can consider elements from the same or different galaxies. Lemma \ref{lem60}  is simple too, but we are giving a proof of it.
\begin{proof}
Let a formula $Q(w, \overline x, \overline y)$ in the signature $\{+, <\}$ express that the tuples $\overline x, \overline y$ are $w$-indistinguishable. Then the statement of lemma \ref{lem60} can be expressed as $(\exists w)(\forall \overline x)(\forall \overline y)(Q(w, \overline x, \overline y) \Rightarrow (R(\overline x) \equiv R(\overline y)))$. Consider a countable non-standard extension $M_0$ of the structure $<\!\!Z, \{+, <\}\!\!>$ and a non-standard number $w_0$ in $M_0$. For any tuples $\overline a, \overline b$ from the sentence $Q(w_0, \overline a, \overline b)$ follows that for any standard $k$ holds $a_i - a_j=k$ iff $b_i - b_j=k$. So, according to the lemma \ref{lem50}, the mapping $f(a_i)=b_i$ can be extended to a shift and so the sentence $(\forall \overline a)(\forall \overline b)(Q(w_0, \overline a, \overline b) \Rightarrow (R(\overline a) \equiv R(\overline b)))$ is true in $M_0$. Then the sentence $(\forall \overline a)(\forall \overline b)(Q(m, \overline a, \overline b) \Rightarrow (R(\overline a) \equiv R(\overline b)))$ is true for a standard number $m$ as well. Because the structures $M_0$ and $M_Z$ are isomorphic (as structures with the only relation $'$) , then the statement of lemma \ref{lem60} holds in $M_Z$.

%Consider the structure $<\!\!Z, \{+, <\}\!\!>$. The relation $R$ is definable in it, and .  Note, that $(\exists w)P(w)$ is true in $M_0$ -- it is enough to consider the formula $P(a)$ for any non-standard number $a$ and apply the lemma \ref{lem50} . So the formula $P(w)$ is true in $M_0$ for some standard number $w$ as well. Because the structures $M_0$ and $M_Z$ are isomorphic (as  structures with the only relation $'$) , then the statement of lemma \ref{lem60} holds in $M_Z$.
\end{proof}

Let a group of permutations $\Gamma'$ include the group of shifts $\Gamma$. Two members $z_1,z_2 \in Z_\infty$ are called \emph{equivalent} (respectively to $\Gamma'$) if for some $\gamma \in \Gamma', a,b \in M_Z$, holds $a-b=z_1, \gamma(a) - \gamma(b)=z_2$. The equivalence class (respectively to $\Gamma'$) of $z$ we denote by $K_z$. A number $z \in Z$ is called \emph{regular} (respectively to $\Gamma'$) if $K_z$ is finite and $\infty \not \in K_z$.

For example if $\Gamma'=\Gamma$ then the equivalence is trivial, if $\Gamma'$ is generated by $\Gamma$ and $f(x)=-x$ then any $z \in Z$ is regular and $K_z=\{z, -z\}$.

\begin{lemma}\label{lem70}
\mbox{ }

(i) If $z_1$ and $z_2$ are regular numbers, then so is $z_1 \pm z_2$.

(ii)  Greatest common divisor of two regular numbers is a regular number.
\end{lemma}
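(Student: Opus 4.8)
The plan is to establish (i) directly and then obtain (ii) from (i) together with Bézout's identity. The governing idea is that the classes $K_z$ behave additively once a difference $z_1+z_2$ is split into two consecutive differences by inserting an intermediate point, and that regularity is precisely the hypothesis that keeps all the intermediate differences finite, so that the sum is taken inside $Z$ rather than collapsing to $\infty$.

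For the additive part of (i), I would take an arbitrary $w \in K_{z_1+z_2}$ and unpack the definition: there are $\gamma \in \Gamma'$ and $a,b \in M_Z$ with $a-b=z_1+z_2$ and $\gamma(a)-\gamma(b)=w$. Since $z_1+z_2 \in Z$ is finite, $a$ and $b$ lie in one galaxy, so the element $c = a-z_1 = b+z_2$ is a legitimate point of that galaxy and satisfies $a-c=z_1$, $c-b=z_2$. Applying the single permutation $\gamma$ and setting $u=\gamma(a)-\gamma(c)$ and $v=\gamma(c)-\gamma(b)$, the definition of the classes gives $u \in K_{z_1}$ and $v \in K_{z_2}$. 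Because $z_1$ and $z_2$ are regular, $u$ and $v$ are honest integers; hence $\gamma(a),\gamma(c),\gamma(b)$ all sit in a common galaxy and $w=u+v$. Thus $K_{z_1+z_2}$ is contained in the finite set $\{u+v : u \in K_{z_1},\, v \in K_{z_2}\} \subseteq Z$, which neither is infinite nor meets $\infty$, so $z_1+z_2$ is regular. The subtractive case reduces to this one: from the definition one checks $K_{-z}=-K_z$, so $-z_2$ is regular whenever $z_2$ is, and $z_1-z_2 = z_1+(-z_2)$.

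For (ii), I would first note that $0$ is regular, since $a-b=0$ forces $a=b$ and hence $K_0=\{0\}$, and then extend (i) by induction to conclude that every integer multiple $mz$ of a regular number $z$ is regular (using that $\pm z$ are regular and that $(m+1)z=mz+z$). Given regular numbers $z_1,\dots,z_k$ with greatest common divisor $d$, Bézout's identity furnishes integers $p_1,\dots,p_k$ with $d=\sum_i p_i z_i$; each $p_i z_i$ is regular, and finitely many applications of (i) show the sum $d$ is regular.

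The point that must be handled with care, rather than a genuine obstacle, is the arithmetic in $Z_\infty$: one has to guarantee that combining the two intermediate differences does not yield $\infty$. This is exactly where regularity does the work, forcing $u$ and $v$ to be finite and keeping $\gamma(a)$ and $\gamma(b)$ in the same galaxy, so that the identity $w=u+v$ is a valid statement about integers and not merely a formal manipulation of symbols in $Z_\infty$.
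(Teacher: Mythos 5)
Your proof is correct and takes essentially the route the paper intends: the paper disposes of the lemma in one line (``Item (i) holds by definition, item (ii) follows from (i)''), and your argument is exactly the fleshed-out version --- unpacking the definition of $K_z$ by inserting the intermediate point $c=a-z_1$ so that regularity keeps both partial differences finite for (i), and deducing (ii) from (i) via B\'ezout (equivalently, the regular numbers form a subgroup of $Z$). The only point left implicit is that the greatest common divisor of the full, possibly infinite, set of regular numbers coincides with the gcd of finitely many of them, which is immediate since gcds of nested finite subsets stabilize.
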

\begin{proof} (i) For any $f \in \Gamma'$ holds $f(a+z_1+z_2)-f(a)=f(a+z_1+z_2)-f(a+z_1)+f(a+z_1)-f(a); f(a+z_1+z_2)-f(a+z_1) \in K_{z_2}, f(a+z_1)-f(a) \in K_{z_1}$, so $K_{z_1+z_2}$ is finite and does not contain $\infty$. 

(ii)  follows from (i).
\end{proof}
\begin{lemma}\label{lem80} 
Let a group of permutations $\Gamma'$ include $\Gamma$ and $d$ is  the greatest common divisor of all numbers regular respectively to $\Gamma'$. Then $K_d=\{d\}$ or $K_d=\{d, -d\}$. 

Moreover, if $K_{d} =\{d\}$ then $K_z=\{z\}$ for any $z$ which is a multiple of $d$, if $K_{d}=\{d, -d\}$ then $K_z=\{z, -z\}$ for any $z$ which is a multiple of $d$.
\end{lemma}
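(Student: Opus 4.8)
The plan is to first pin down the ambient structure of the class $K_d$, then prove the statement in two parts: the inclusion $K_d\subseteq\{d,-d\}$ (the hard part), and the ``moreover'' assertion, which will follow almost mechanically once the inclusion is in hand.

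First I would record three structural facts. (a) By Lemma~\ref{lem70}(ii) the number $d$ is regular, so $K_d$ is finite and $\infty\notin K_d$; in particular $d\in K_d$ by reflexivity. (b) Every $u\in K_d$ is itself regular: since $u$ and $d$ are equivalent we have $K_u=K_d$, which is finite and omits $\infty$; hence $d\mid u$, because $d$ is the greatest common divisor of all regular numbers. Thus $K_d$ is a finite set of multiples of $d$ containing $d$. (c) Two bounds quantifying how $\Gamma'$ acts on differences. Swapping a pair negates its difference, so $K_{-z}=-K_z$. And, exactly as in Lemma~\ref{lem70}(i), inserting a genuine intermediate point $b$ with $a-b=z$, $b-c=w$ into a pair $(a,c)$ with $a-c=z+w$, and noting that $\gamma(a)-\gamma(b)\in K_z$ and $\gamma(b)-\gamma(c)\in K_w$ are finite (so $\gamma(a),\gamma(b),\gamma(c)$ share a galaxy), yields the subadditive bound $K_{z+w}\subseteq K_z+K_w$ (sumset) for regular $z,w$; iterating gives $K_{nd}$ inside the $n$-fold sumset of $K_d$. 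I would also record the complementary \emph{lower} bound coming from a single permutation: fixing a progression $p_i$ with $p_{i+1}-p_i=d$, the consecutive image differences $\delta_i=\gamma(p_{i+1})-\gamma(p_i)$ lie in $K_d$, and every window sum $\gamma(p_{i+n})-\gamma(p_i)=\sum_{t=i}^{i+n-1}\delta_t$ lies in $K_{nd}$.

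For the inclusion I would argue by contradiction: suppose some $u\in K_d$ has $|u|\ge 2d$. Put $M=\max K_d$, $m=\min K_d$ (both multiples of $d$). Since $u$ is equivalent to $d$ we have $K_u=K_d$, and more generally $K_{nd}=K_d$ for every $n$ in $D=\{n:K_{nd}=K_d\}$. The intended mechanism is to use a witnessing $\gamma\in\Gamma'$ (mapping a difference-$d$ pair to difference $u$) together with its shift-conjugates, all in $\Gamma'$, to \emph{stack} this expansion, depositing multiples of ever larger modulus back into the single finite class $K_d$ via the window-sum bound, which contradicts finiteness. Concretely I would try to show $D$ is closed under multiplication — so it contains $u/d$ and its powers — by composing shift-conjugates of $\gamma$ to route a difference-$(k^2d)$ pair to difference $d$ through an intermediate difference-$(kd)$, and then force an unboundedly large window sum into some $K_{k^r d}=K_d$.

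The main obstacle is precisely this stacking step, and I want to be honest that the subadditive bound (c) alone cannot do it. Indeed, the constraints it gives together with $K_{nd}=K_d$ reduce to inequalities comparing $M$ and $m$ (of the shape $M\le m^2/d$ and $|m|\le M^2/d$), and these are satisfied by spurious candidates such as $K_d=\{d,-2d\}$ (for which one can even choose $K_{2d}=\{-d,2d\}$ consistently with all the sumset inequalities); so only an honest lower bound that \emph{concentrates} large differences into one class can exclude them. The difficulty is that the long-range displacements of a single permutation \emph{a priori} spread over the distinct finite classes $K_{nd}$, so the real work is to guarantee that arbitrarily large window sums land in a class equivalent to $K_d$. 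I would attack this by using the freedom to translate the galaxies of $M_Z$ independently (available since $\Gamma\subseteq\Gamma'$) to build, by composition, an element of $\Gamma'$ exhibiting several consecutive steps equal to $u$; its telescoped sum then lies in $K_{(u/d)d}=K_d$ yet exceeds $M$ in modulus, and iteration gives the contradiction. This is the one place where I expect the genuine effort to lie. Once $K_d\subseteq\{d,-d\}$ is known, $d\in K_d$ forces $K_d=\{d\}$ or $K_d=\{d,-d\}$.

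The ``moreover'' statement is then short. If $K_d=\{d\}$, every $\gamma\in\Gamma'$ sends each difference-$d$ pair to a difference-$d$ pair; telescoping a difference-$(nd)$ pair through $n$ steps of size $d$ gives $\gamma(a)-\gamma(b)=nd$ for all such $\gamma$ and pairs, so $K_{nd}=\{nd\}$. If $K_d=\{d,-d\}$, then on any progression $p_i$ the steps $\delta_i$ lie in $\{d,-d\}$; a step $+d$ adjacent to a step $-d$ would give $\gamma(p_{i+2})=\gamma(p_i)$, contradicting injectivity, so all steps share one sign and $\gamma(p_{i+n})-\gamma(p_i)=\pm nd$, whence $K_{nd}\subseteq\{nd,-nd\}$. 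Finally $nd\in K_{nd}$ always, and since $-d\in K_d$ some $\gamma$ has all its steps equal to $-d$ and realizes $-nd$; hence $K_{nd}=\{nd,-nd\}$ for every multiple $nd$ of $d$.
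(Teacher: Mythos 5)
Your preliminary reductions are sound (every element of $K_d$ is regular, hence a multiple of $d$; $K_{-z}=-K_z$; the sumset upper bound), and your proof of the ``moreover'' part is essentially identical to the paper's: telescoping, plus the observation that a step $+d$ adjacent to a step $-d$ would force $\gamma(p_{i+2})=\gamma(p_i)$ against injectivity, plus realizing $-nd$ from a witness of $-d\in K_d$. But the central step --- the inclusion $K_d\subseteq\{d,-d\}$ --- has a genuine gap, and you flag it yourself: the ``stacking'' construction is never carried out, and I do not believe it can be carried out by the means you indicate. The only elements of $\Gamma'$ you are entitled to use are $\gamma$, the shifts, and their products. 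Conjugating $\gamma$ by shifts lets you relocate the one expanding pair wherever you like, but when you compose two such conjugates, the action of the second factor on the image points produced by the first is constrained only to lie in the appropriate classes $K_z$; nothing forces it to expand a second consecutive step while preserving the first one, which is exactly what a chain of $N$ consecutive steps equal to $u$ requires. Moreover, the ``independent translation of galaxies'' freedom buys nothing here: a chain $p_0,\dots,p_N$ with finite consecutive differences lies inside a single galaxy, and so does its image, so inter-galaxy freedom never touches the quantities you need to control. So the lower-bound concentration you correctly identify as the crux is left unproved.

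The paper closes this gap by a mechanism that constructs nothing at all: it works with a \emph{single} witnessing permutation, analyzed globally along an arithmetic progression, using only bijectivity and the finiteness of the classes $K_{kd}$. Concretely, take $D\in K_d$ of maximal modulus, $|D|=Nd$ with $N>1$, and $\gamma\in\Gamma'$ with $a-b=D$, $\gamma(a)-\gamma(b)=d$. Split the progression $a+d\mathbb{\cdot}Z=\{a+md\mid m\in Z\}$ into the $N$ residue classes $C_k=\{a+kd+zD\mid z\in Z\}$. All points of this progression have images at finite, multiple-of-$d$ distance from $\gamma(a)$ (regularity), so a pigeonhole over a block $E$ of $N$ consecutive multiples of $d$ near $\gamma(a)$ yields a class $C_{k'}$ whose image avoids $E$; since consecutive elements of $C_{k'}$ differ by $D$, their image-differences lie in $K_D=K_d$ and hence have modulus at most $|D|$, so the image of $C_{k'}$ cannot jump across $E$ and stays entirely on one side of it. Applying regularity to $\gamma^{-1}$ shows that infinitely many points on the \emph{other} side of $E$ have preimages in the progression, hence infinitely many in a single class $C_{k''}$. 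Pairing $C_{k'}$ and $C_{k''}$ along the fixed regular difference $(k'-k'')d$ then produces image-differences of unbounded modulus, contradicting finiteness of $K_{(k'-k'')d}$. This pigeonhole-plus-boundedness argument is precisely the missing idea that replaces your stacking step.
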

\begin{proof} 
Denote by $D$ a number equivalent to $d$ with the maximum absolute value. Then $D=N \cdot d$ or $D=-N \cdot d$ for a natural number $N$. Suppose that $N>1$. Choose $\gamma \in \Gamma', a,b \in M_Z$ such as $a-b=D, \gamma(a) - \gamma(b)=d$. For any $0 \leqslant k < N$ denote $C_k=\{c_{k,i} \mid  c_{k,i}=a+k \cdot d + i\cdot D, i \in Z\}$. The collection $\{C_k\}$ is the partition of the set $\{a+z \cdot d \mid z \in Z\}$. Since $d$ is regular, the expression $\gamma (a) - \gamma (c)$ is finite and it is a multiple of $d$ for any $c \in C_k, 0 \leqslant k < N$. So the collection $\{\gamma(C_k)\}$ is the partition of the set $\{\gamma(a)+z \cdot d \mid z \in Z\}$.

Consider the set $E=\{\gamma(a), \gamma(a)+d, \dots, \gamma(a)+(N-1) \cdot d\}$.  Since   $\gamma(a),\gamma(b) \in \gamma(C_0) \cap E$ and there are only $N$ elements in $E$, then there is $0 \leqslant k' < N$ such that $\gamma(C_{k'}) \cap E = \varnothing$. Since the absolute value of $D$ is maximal in the equivalence class of $d$ then $\gamma(C_{k'})$ has to lie on one side from the segment $E$: either $\gamma(c) < \gamma(a)$ for any $c \in C_{k'}$, or $\gamma(a)+(N-1) \cdot d < \gamma (c)$ for any $c \in C_{k'}$. Otherwise there is such $c_{k',i}$ that $|\gamma(c_{k',i+1}) - \gamma(c_{k',i})| > D$, but $c_{k',i+1} - c_{k',i} = D$ and $D$ has maximal absolute value in its class of equivalence. 

Suppose that $\gamma(c) < \gamma(a)$ for any $c \in C_{k'}$ (another case is similar). There is $0 \leqslant k'' < N$ such that the set $\{c \in C_{k''} \mid \gamma(c) > \gamma(a)\}$ is infinite. Then the value of $|\gamma (a+k' \cdot d +z \cdot D) - \gamma (a+k'' \cdot d +z \cdot D)|$ can be arbitrary big  when  $z \in Z$, in contradiction with regularity of  $(k' - k'') \cdot d$. So $N=1$ and $K_d =\{d\}$ or $K_d =\{d, -d\}$.

If $K_d =\{d\}$ then it is clear that $K_z=\{z\}$ for any $z$ multiple of $d$. Suppose that $K_d=\{d,-d\}$ and $z=n\cdot d$ where $n$ is a natural number (case of $z=-n\cdot d$ is absolutely similar). For any $0 \leqslant i <n$ and any $\gamma \in \Gamma', a \in M_Z$ holds $\gamma(a+(i+1)\cdot d) - \gamma(a+i \cdot d)=d$ or $\gamma(a+(i+1)\cdot d) - \gamma(a+i \cdot d)=-d$. Moreover, since $\gamma(a+(i+2)\cdot d) \ne \gamma(a+i \cdot d)$ all the differences have the same sign, i.e. $\gamma(a+n\cdot d) - \gamma(a)=n \cdot d$ or $\gamma(a+n\cdot d) - \gamma(a)=-n \cdot d$ for any $a \in M_Z$, i.e. $K_z \subset \{z, -z\}$. Since there are $\gamma \in \Gamma', a \in M_Z$, such as $\gamma(a+d) - \gamma(a)=-d$, then $\gamma(a+z) - \gamma(a) = -z$ and $K_z=\{z, -z\}$.
\end{proof}
Hereby if a group of permutations $\Gamma'$ includes $\Gamma$ and $d$ is  the greatest common divisor of all regular respectively to $\Gamma'$ numbers and $f \in \Gamma'$ then there are three essential possibilities (the case when there is no regular number is trivial, as we will show later): (1) $f(a+n\cdot d)-f(a)=n\cdot d$ for any $a \in M_Z$ and any natural number $n$ (such permutations are called permutations of  \emph{first type}), (2) $f(a+n\cdot d)-f(a)=-n\cdot d$ for any $a \in M_Z$ and  any natural number $n$ (such permutations are called permutations of  \emph{second type}), and (3) for any $a \in M_Z$ and  any natural number $n$ holds $f(a+n\cdot d)-f(a)=n\cdot d$ or $f(a+n\cdot d)-f(a)=-n\cdot d$, each of this equalities is realized by some $a, n$ (such permutations are called permutations of  \emph{third type}).

If $K_d=\{d\}$ then any permutation $f \in \Gamma'$ belongs to the first type. If $K_d=\{d,-d\}$, then a permutation $f \in \Gamma'$ may belong to the first, second, or third type.

The point of the following lemma is that if some differences between items of a vector $\overline a$ are non-regular (respectively to a group, preserving a relation $R$) then they can be replaced by infinity without changing the value of $R(\overline a)$.

\begin{lemma}\label{lem90}
Let $\Gamma' \supset \Gamma$, an $n$-ary relation $R$ be definable in $<\!\!Z, \{'\}\!\!>$, $\Gamma'$ preserve $R$ and $\overline a=(a_0,\dots,a_{n-1}) \in M_Z$. Then there is such a vector $\overline b=(b_0,\dots,b_{n-1}) \in M_Z$
that

(i) $R(\overline a) \equiv R(\overline b)$.

(ii) if the difference $a_i-a_j$ is not regular respectively to $\Gamma'$ then $b_i-b_j= \infty$.

(iii) if the difference $a_i-a_j$ is regular then $|a_i-a_j|=|b_i-b_j|$. Moreover if $\Gamma'$ contains permutations of the first type only, then $a_i-a_j=b_i-b_j$; if $\Gamma'$ doesn't contain a permutation of the third type, then either $a_i-a_j=b_i-b_j$ for any $i,j$ with regular difference $a_i-a_j$ or $a_i-a_j=b_j-b_i$ for any $i,j$ with regular difference $a_i-a_j$.

\end{lemma}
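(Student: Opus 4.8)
The plan is to exploit the dichotomy between regular and non-regular differences, handling properties (ii) and (iii) by separate mechanisms and then merging them through the width lemma (Lemma \ref{lem60}). Since every shift preserves a relation definable in $<\!\!Z,\{'\}\!\!>$, we have $\Gamma\subseteq\Gamma_R$, and regularity is understood with respect to the group $\Gamma_R$. By Lemma \ref{lem70} the relation ``$a_i-a_j$ is regular'' is reflexive, symmetric and transitive on the indices $\{1,\dots,n\}$, so it partitions them into blocks: inside a block all pairwise differences are regular (hence, by Lemma \ref{lem80}, multiples of $d$ with $K_{a_i-a_j}\subseteq\{a_i-a_j,-(a_i-a_j)\}$), while between two distinct blocks every difference is non-regular. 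In these terms property (ii) asks that distinct blocks be sent to distinct galaxies of $\overline b$, and property (iii) constrains the within-block differences.

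Next I would record the mechanism behind (iii). Because a regular difference has $\infty\notin K_{a_i-a_j}$, every $\gamma\in\Gamma_R$ keeps a regular-difference pair inside a single galaxy and sends its difference to $\pm(a_i-a_j)$; moreover, by the argument in the proof of Lemma \ref{lem80}, this sign is constant along each galaxy. Consequently, if $\Gamma_R$ has permutations of the first type only the sign is always $+$, giving $a_i-a_j=b_i-b_j$; if $\Gamma_R$ has no third-type permutation the sign is global across all galaxies, giving the stated ``all preserved or all reversed'' alternative; and if third-type permutations occur, only the absolute values are forced. This matches exactly the three clauses of (iii), and it shows that the sign pattern is something I may realize by the type of a single element (or a composition) of the group $\Gamma_R$.

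For the construction itself I would first produce a $\gamma\in\Gamma_R$ that realizes the required sign pattern on within-block differences and, in addition, drives every cross-block difference either to $\infty$ or to absolute value at least the width $w$ of $R$. This is possible because a non-regular difference satisfies either $\infty\in K_{a_i-a_j}$ (so some element of $\Gamma_R$ already forces that pair into different galaxies) or $K_{a_i-a_j}$ is infinite (so elements of $\Gamma_R$ realize it with arbitrarily large absolute value); combining such elements with a sign-fixing element of the appropriate type yields $\gamma$. Set $\overline c=\gamma(\overline a)$, so $R(\overline c)=R(\overline a)$, the within-block differences of $\overline c$ already satisfy (iii), and every cross-block difference of $\overline c$ is $\infty$ or $\geqslant w$ in absolute value. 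Finally define $\overline b$ to agree with $\overline c$ on all within-block differences but to place the blocks in pairwise distinct fresh galaxies. Then $\overline c$ and $\overline b$ differ only in differences that are $\infty$ or $\geqslant w$ on both sides, so they are $w$-indistinguishable and Lemma \ref{lem60} gives $R(\overline b)=R(\overline c)=R(\overline a)$; thus $\overline b$ satisfies (i)--(iii).

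The step I expect to be the main obstacle is the construction of this single $\gamma$ enlarging or infinitizing \emph{all} cross-block differences at once: a permutation chosen to blow up one non-regular pair may shrink another, and since non-shift elements of $\Gamma_R$ can scramble galaxies, I cannot simply treat the pairs one at a time and assume that earlier separations survive later permutations. The feature I would build the argument around is that regular-difference pairs stay confined to a single galaxy under \emph{every} element of $\Gamma_R$; this lets me treat whole blocks as rigid units, so that by composing elements of $\Gamma_R$ I can push the blocks apart block by block, using the unboundedness or the direct separation granted by non-regularity, until all cross-block differences exceed $w$, after which the width lemma finishes the job.
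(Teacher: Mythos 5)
Your block decomposition, your sign analysis on regular differences, and your final application of Lemma \ref{lem60} all correspond to real mechanisms in the paper's proof; but the core of your construction --- a single $\gamma\in\Gamma_R$ that simultaneously drives \emph{every} cross-block difference to $\infty$ or beyond the width $w$ --- is exactly the step you leave unproved, and your proposed fix does not close it. You correctly identify the obstruction yourself: an element of $\Gamma_R$ chosen to separate one pair of blocks can compress another pair, because a cross-block difference is merely non-regular, which only says its class $K_z$ is infinite or contains $\infty$; that class may also contain arbitrarily small values, so nothing forces later compositions to respect earlier separations. Block ``rigidity'' does not help: it constrains within-block differences, not the relative position of two blocks, so ``push the blocks apart block by block until all cross-block differences exceed $w$'' is an assertion of precisely the claim at issue, not an argument. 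Note also that shifts cannot protect your progress: once two blocks are far apart but still in the same galaxy, any shift translates that galaxy as a whole, so you cannot precondition the next element of $\Gamma_R$ to keep those blocks separated.

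The paper's proof is structured so that such a simultaneous $\gamma$ is never needed. It inducts on the number of finite non-regular differences, and each step interleaves three moves: a shift $s$ to precondition the vector, one element $\gamma\in\Gamma_R$ to blow up the single chosen difference beyond $n\cdot\max(w,w')$, and then --- crucially --- an application of Lemma \ref{lem60} that replaces the current vector by a $w$-indistinguishable one in which that large difference becomes genuinely infinite (the galaxy is split at a gap exceeding $\max(w,w')$) and all previously infinite differences are restored to $\infty$. This interleaved replacement of the vector is what locks in progress: once blocks lie in distinct galaxies, shifts can move them independently, so the preconditioning works at the next step; differences that are merely ``large within one galaxy'' enjoy no such protection. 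Deferring Lemma \ref{lem60} to a single final cleanup, as you do, removes exactly the tool that makes the iteration converge. To repair your argument you would either have to prove the simultaneous-separation property of $\Gamma_R$ directly (which seems to require the classification of $\Gamma_R$ that this lemma is used to establish), or restructure it as the paper does: one non-regular pair at a time, with Lemma \ref{lem60} applied after every group element.
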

\begin{proof}
We prove by induction on number of such pairs $i,j$ that $a_i-a_j$ is finite and not regular. Suppose that $a_0-a_1$ finite and not regular. We'll construct such vector $\overline b$ that

(a)  $R(\overline a) \equiv R(\overline b)$;
 
(b) $b_0-b_1=\infty$;

(c) for any $i,j < n$ if $a_i-a_j=\infty$ then $b_i-b_j=\infty$;

(d) for any $i,j < n$ if $b_i-b_j=\infty$ then $a_i-a_j$ is not regular;

(e) for some $\gamma \in \Gamma'$ and any $i,j < n$ if $b_i-b_j<\infty$ then $b_i-b_j=\gamma(a_i)-\gamma(a_j)$;

Let $w$ be the width of relation $R$ and $w'$ is the maximal absolute value of regular differences $|a_i - a_j|, (i,j < n)$. We claim, that there is such permutation $\gamma \in \Gamma'$ that $|\gamma(a_0)-\gamma(a_1)| > n \cdot \max(w,w')$ and $|\gamma(a_i) - \gamma(a_j)|>w$ if $a_i-a_j=\infty$. 

In fact, the difference $a_0-a_1$ is not regular, so $|f(a_0)-f(a_1)| > n \cdot \max(w,w')$ for some $f \in \Gamma'$. Let us choose such a shift $s$, that (1) $s(a_0)=a_0, s(a_1)=a_1$ (2) if $a_i - a_j = \infty$ then $|f(s(a_i)) - f(s(a_j))| > w$. So we can take $f \circ s$ as $\gamma$. 

Due to lemma \ref{lem60} we can find such a vector $\overline a'$ for the vector $\gamma(\overline a)$ that (1) $R(\overline a) \equiv R(\overline a')$ (2) if $a_i - a_j < \infty$ then $a'_i - a'_j=\gamma(a_i) - \gamma(a_j)$ (3) if $a_i - a_j = \infty$ then $a'_i - a'_j = \infty$.

If $a'_0-a'_1=\infty$ then we can choose the vector $\overline a'$ as the vector $\overline b$: it's easy to see that conditions (a)--(e) hold.

If $a'_0 - a'_1 < \infty$ then $a'_0, a'_1$ belong to the same galaxy $U$. Suppose that $a'_0 < a'_1$. Let $c_0<\dots<c_k$ be all items of the vector $\overline a'$ from the galaxy $U$. There is such $c_m$, that $a'_0 \leqslant c_m<c_{m+1} \leqslant a'_1$ and $c_{m+1}- c_{m}>\max(w,w')$. Find a vector $\overline b$ such that (1) $b_i=a'_i$ if $a'_i \not \in U$ or $a'_i \leqslant c_m$ (2) all items $\{b_i \mid a_i > c_m, a_i \in U\}$ lie in a new galaxy which doesn't contain items from the vector $\overline a'$ and (3) $b_i-b_j=a'_i-a'_j$ if $a'_i, a'_j > c_m, a_i, a_j \in U$. Due to lemma \ref{lem60} holds  $R(\overline a')\equiv R(\overline b)$.
Because  $c_{m+1}- c_{m}>w'$ then $b_i-b_j$ is regular iff $a_i-a_j$ is regular, so it's easy to see that conditions (a) -- (e) hold.  
\end{proof}

From now by $\Gamma_R$ we denote the group of permutations preserving the relation $R$.

\begin{seq}
If  a relation $R$ is definable in $<\!\!Z, \{'\}\!\!>$ and no number is regular respectively to $\Gamma_R$ then $R$ is constantly true (false).
\end{seq}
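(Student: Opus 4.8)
The plan is to deduce the Corollary almost directly from Lemma \ref{lem90}, using only Lemma \ref{lem50} together with the fact that every shift preserves every definable relation (shifts are exactly the permutations preserving $'$, hence automorphisms of $<\!\!Z,\{'\}\!\!>$, and so preserve any relation definable in it). First I would fix an arbitrary vector $\overline a=(a_1,\dots,a_n)\in M_Z$ and feed it to Lemma \ref{lem90}, obtaining $\overline b=(b_1,\dots,b_n)$ with $R(\overline a)=R(\overline b)$. The difference $0$ is regular (its class is $\{0\}$), but by hypothesis no nonzero number is regular respectively $\Gamma_R$; hence clause (iii) of Lemma \ref{lem90} keeps $b_i=b_j$ whenever $a_i=a_j$, while clause (ii) forces $b_i-b_j=\infty$ for every pair with $a_i\ne a_j$. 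Thus every difference $b_i-b_j$ lies in $\{0,\infty\}$: the entries of $\overline b$ that coincide are exactly those of $\overline a$, and distinct entries of $\overline b$ occupy pairwise distinct galaxies.

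Next I would show that $R(\overline b)$ depends only on the equality pattern of $\overline a$. If $\overline a'$ is another vector with the same equality pattern and $\overline b'$ is its reduction, then for all $i,j$ we have $b_i-b_j=b'_i-b'_j$ (both equal $0$ on coinciding pairs and both equal $\infty$ on the remaining pairs). By Lemma \ref{lem50} the partial map $b_i\mapsto b'_i$ extends to a shift $s$, and since $s$ preserves $'$ it preserves $R$; therefore $R(\overline b)=R(\overline b')$. Combining this with $R(\overline a)=R(\overline b)$ and $R(\overline a')=R(\overline b')$, the value of $R$ is constant on each class of vectors sharing an equality pattern.

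Finally I would conclude. A relation whose value is determined solely by the equality pattern of its arguments is a Boolean combination of the atomic formulas $x_i=x_j$, hence definable already in the empty signature; such a relation is equivalent by definability to the identically true relation and so sits at the bottom of the lattice. In particular, on vectors with pairwise distinct coordinates $R$ takes a single value, i.e. it is identically true or identically false, as the Corollary asserts.

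I do not anticipate a real obstacle here: Lemma \ref{lem90} performs all the genuine work of pushing the non-regular differences out to infinity, and what remains is the routine observation that fully galaxy-separated vectors of a fixed equality pattern form a single orbit of $\Gamma$, together with $\Gamma\subseteq\Gamma_R$. The one point that needs attention is the bookkeeping of coincidences: the reduction must neither destroy an equality $a_i=a_j$ nor manufacture a new one, which is exactly what clause (iii) of Lemma \ref{lem90} guarantees by fixing $|b_i-b_j|=|a_i-a_j|$ on the regular (here: zero) differences.
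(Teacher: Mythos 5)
Your proof is correct and takes exactly the route the paper intends: the statement is placed as an unproved corollary of Lemma \ref{lem90}, and your argument --- reduce an arbitrary vector via Lemma \ref{lem90} so that all nonzero differences become $\infty$, then use Lemma \ref{lem50} together with shift-invariance to conclude that $R$ depends only on the equality pattern --- is precisely that derivation spelled out. Your remark that the honest conclusion is ``$R$ is definable in the empty signature'' (i.e.\ $R\approx$ the identically true relation, Boolean combinations of equalities being the case the paper's wording glosses over) is an accurate reading of what the corollary can actually assert.
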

Recall that for any natural number $n$ by $A_{2,n}$ we denote the relation $x_1-x_2=n$, by $A_{1,n}$ we denote the relation $x_1-x_2=x_3-x_4=n \lor x_1-x_2=x_3-x_4=-n$, and by $A_{0,n}$ we denote the relation $|x_1-x_2|=n$.

\begin{sen} Suppose that relation $R$ is definable in $<\!\!Z, \{'\}\!\!>$, $d$ is the common greatest divisor of regular respectively to $\Gamma_R$ numbers. Then 

(i) if $\Gamma_R$ doesn't contain a permutation of second or third types then $R \approx A_{2,d}$.

(ii) if $\Gamma_R$ doesn't contain a permutation of third type but contains a permutation of second type then $R \approx A_{1,d}$.

(iii) if $\Gamma_R$ contains a permutation of third type then $R \approx A_{0,d}$.
\end{sen}
\begin{proof} It's clear that $R \succcurlyeq A_{2,d}$ ($R \succcurlyeq A_{1,d},  R  \succcurlyeq A_{0,d}$ respectively): it's easy to note that if (i) holds then any permutation from $\Gamma_R$ preserves $A_{2,d}$, if (ii) holds then it preserves $A_{1,d}$, and  if (iii) holds then it preserves $A_{0,d}$. 

To prove the reverse sentence we need to show that any permutation preserving  $A_{2,d} (A_{1,d}, A_{0,d})$ belongs, if the corresponding condition holds,  to $\Gamma_R$.

Denote by $\Gamma'$ the set of such permutations $f$, that $|f(a+d)-f(a)|=d$ for any $a \in M_Z$. It's clear that $\Gamma'$ is the set of all permutations preserving $A_{0,d}$; the subgroup of $\Gamma'$ containing permutations of the first and second type is the set of all permutations preserving $A_{1,d}$; the subgroup of $\Gamma'$ containing permutations of the first type is the set of all permutations preserving $A_{2,d}$.

Proof (i). Suppose that there is a permutation $f$ of the first type in $\Gamma' \setminus \Gamma_R$. Then there is a vector $\overline a=(a_0,\dots,a_{n-1}) \in M_Z$ such that $R(\overline a) \not \equiv  R(f(\overline a))$. By definition of $\Gamma'$, if $a_i-a_j$ is non-regular  respectively $\Gamma_R$, then the difference $f(a_i)-f(a_j)$ is non-regular as well; by definition of the first type if $a_i-a_j$ is regular respectively $\Gamma_R$, then $f(a_i)-f(a_j)= a_i-a_j$. We use lemma \ref{lem90}  to choose vectors $\overline b, \overline c$ corresponding to vectors $\overline a$ and $f(\overline a)$ respectively. Because $\Gamma_R$ contains permutations of the first type only,  regular differences in vectors $\overline b, \overline c$ are the same. So, $R(\overline b) \equiv R(\overline c)$ according lemma \ref{lem60}, contradiction. 

Proof (ii). Suppose that there is a permutation $f$ of the first or second type in $\Gamma' \setminus \Gamma_R$. We choose vectors $\overline a, f(\overline a), \overline b, \overline c$ as in the case (i). Permutation $f$ is the permutation of the first or second type, so if $b_i-b_j=c_i-c_j$ for some regular difference $b_i-b_j$ then the same equality holds for any regular difference. This contradict the lemma \ref{lem60}.  

If $b_i-b_j=c_j-c_i$ for some regular difference $b_i-b_j$ then the same equality holds for any regular difference. There is a permutation $\gamma$ of second type in $\Gamma_R$, i.e. $\gamma (t+z \cdot d) - \gamma(t)=-z \cdot d$ for any $z \in Z, t \in M_Z$. Fix some $t \in M_z$ and consider the set $S=\{t+d \cdot z \mid z \in Z\}$. 
Choose a vector $\overline s$ in $S$ such that (1)if $|c_i - c_j| < \infty$ then $s_i-s_j=c_i-c_j$, (2) if $|c_i - c_j| = \infty$ then $|s_i - s_j| > w$ where $w$ is the width of the relation $R$. By lemma \ref{lem60} holds $R(\overline c) \equiv R(\overline s)$. Because $s_i-s_j=\gamma(s_j)-\gamma(s_i)$ for any $i,j < n$, vectors  $\overline b$ and $\gamma (\overline s)$ are  $w$-indistinguishable, which contradicts to lemma \ref{lem60}.

Proof (iii). Suppose that there is a permutation $f$ in $\Gamma' \setminus \Gamma_R$. We choose vectors $\overline a, f(\overline a), \overline b, \overline c$ as in the case (i). If the difference $b_i-b_j$ is regular then $b_i-b_j=c_i-c_j$ or $b_i-b_j=c_j-c_i$. All differences between items of $\overline b$ from same galaxy are regular, so if $b_i-b_j=c_i-c_j$ and $b_k - b_l=c_k-c_l$ then $b_i$ and $b_k$ belongs to different galaxies. There is a permutation $\gamma$ of third type in $\Gamma_R$, i.e. for some $t_1,t_2 \in M_Z$ and any $z \in Z$holds $\gamma (t_1+z \cdot d) - \gamma(t_1)=z \cdot d, \gamma (t_2+z \cdot d) - \gamma(t_2)=-z \cdot d$. 

Consider sets $S_1=\{t_1+z \cdot d \mid z \in Z\}, S_2=\{t_2+z \cdot d \mid z \in Z\}$. Choose such collection $\{s_0,\dots,s_{n-1}\} \subset S_1 \cup S_2$ that (1) if $|c_i - c_j| < \infty$ then  $s_i-s_j=c_i - c_j$; (2) if $|c_i - c_j|=\infty$ then $|s_i - s_j| >w$ and $|\gamma(s_i) - \gamma(s_j)| >w$, where $w$ is the width of the relation $R$; (3) if the difference $b_i - b_j$ is a regular number and $b_i - b_j=c_i-c_j$ then $s_i, s_j \in S_1$; (4) if the difference $b_i - b_j$ is a regular number and $b_i - b_j=c_j-c_i$ then $s_i, s_j \in S_2$. By lemma \ref{lem60} holds $R(\overline s) \equiv R(\overline c)$. Because for any regular difference $b_i-b_j$ holds $\gamma(s_i)-\gamma(s_j)=b_i-b_j$, vectors $\overline b$ and $\gamma (\overline s)$ are  $w$-indistinguishable, which contadict to lemma \ref{lem60}.
\end{proof}


\begin{thebibliography}{9}
\normalsize
\bibitem{sv}
L. Svenonius, 
\emph{A theorem on permutations in models}, 
Theoria (Lund) \textbf{25} (1959)

\bibitem{MAC} Macpherson, D.: 
A survey of homogeneous structures. 
Discrete Mathematics, vol 311, No 15, 1599--1634 (2011) 

\bibitem{FRA} Frasnay, Claude: 
Quelques probl\`emes combinatoires concernant les ordres totaux 
et les relations monomorphes. 
Annales de l'\,institut Fourier, Vol. 15, No. 2, Institut Fourier. (1965) 

\bibitem{cam}
P.J. Cameron, \emph{Transitivity of permutation groups on unordered sets}, Math. Z. 
\textbf{148(2)} 127 - 139 (1976)

\bibitem{sem_much}
A.L. Semenov, An. A. Muchnik
\emph {?????}, 

\bibitem{csr}A.L. Semenov, S.F. Soprunov, V.A. Uspensky. 
\emph{The Lattice of Definability. Origins, Recent Developments, and Further Directions} In Computer Science-Theory and Applications: 9th International Computer Science Symposium in Russia, CSR 2014, Moscow, Russia, June 7-11, 2014. Proceedings, pp. 23-38. Springer International Publishing, 2014.



\end{thebibliography}
\end{document}